 \newlength\tindent
\newtheorem{theorem}{Theorem}
\newtheorem*{theoremm}{Theorem}
\newtheorem{lemma}{Lemma}
\newtheorem{corollary}{Corollary}
\newtheorem*{definition}{Definition}
\newtheorem{remark}{Remark}
\providecommand{\keywords}[1]
{
  \small	
  \textbf{Keywords:} #1
}
\providecommand{\subjclass}[1]
{
  \small	
  \textbf{MSC Class:} #1
}
\begin{document}

\title{A polynomial analogue of Jacobsthal function}
\author{Alexander Kalmynin \footnote{National Research University Higher School of Economics, Moscow, Russia} \footnote{Steklov Mathematical Institute of Russian Academy of Sciences, Moscow, Russia} \\ email: \href{mailto:alkalb1995cd@mail.ru}{alkalb1995cd@mail.ru} 
\and Sergei Konyagin \footnotemark[\value{footnote}] \\ email: \href{mailto:konyagin23@gmail.com}{konyagin23@gmail.com}}
\date{}
\maketitle
\begin{abstract}
For a polynomial $f(x)\in \mathbb Z[x]$ we study an analogue of Jacobsthal function, defined by the formula
\[
j_f(N)=\max_{m}\{\text{For some } x\in \mathbb N \text{ the inequality } (x+f(i),N)>1 \text{ holds for all }i\leq m\}.
\]
We prove a lower bound
\[
j_f(P(y))\gg y(\ln y)^{\ell_f-1}\left(\frac{(\ln\ln y)^2}{\ln\ln\ln y}\right)^{h_f}\left(\frac{\ln y\ln\ln\ln y}{(\ln\ln y)^2}\right)^{M(f)},
\]
where $P(y)$ is the product of all primes $p$ below $y$, $\ell_f$ is the number of distinct linear factors of $f(x)$, $h_f$ is the number of distinct non-linear irreducible factors and $M(f)$ is the average size of the maximal preimage of a point under a map $f:\mathbb F_p\to \mathbb F_p$. The quantity $M(f)$ is computed in terms of certain Galois groups.
\end{abstract}
\keywords{Jacobsthal function, sieve, polynomial, Galois group}

\subjclass{11N25; 11N32}

\section{Introduction and main results}

Let $N$ be a natural number. The Jacobsthal function $j(N)$ is defined as the length of the largest gap between numbers which are coprime to $N$. In other words,
\[
j(N)=\max_{m}\{\text{For some } x\in \mathbb N \text{ the inequality } (x+i,N)>1 \text{ holds for all }i\leq m\}
\]
Estimates for $j(N)$ are used to establish results on large gaps between consecutive primes. For instance, the current best lower bound for $p_{n+1}-p_n$ is
\begin{theoremm}[\cite{FGKMT}]
Let $y\geq 19$ and
\[
P(y)=\prod_{p\leq y}p.
\]
Then
\[
j(P(y))\gg y\frac{\ln y\ln\ln\ln y}{\ln\ln y}=(1+o_{y\to \infty}(1))\ln P(y)\frac{\ln\ln P(y)\ln\ln\ln\ln P(y)}{\ln\ln\ln P(y)}.
\]
In particular, for $X\geq e^{16}$
\[
\max_{p_{n+1}\leq X}(p_{n+1}-p_n)\gg \ln X\frac{\ln\ln X\ln\ln\ln\ln X}{\ln\ln\ln X}.
\]
\end{theoremm}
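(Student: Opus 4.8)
The plan is to translate the lower bound into a covering problem and then run an Erd\H{o}s--Rankin construction whose middle range of primes is driven by the Maynard--Tao sieve. Since $\theta(y)=\sum_{p\le y}\ln p=(1+o(1))y$, we have $\ln P(y)=(1+o(1))y$, so the two displayed quantities are asymptotically equal and it suffices to prove the first. By the Chinese Remainder Theorem it is enough to produce, for every prime $p\le y$, a residue class $a_p\bmod p$ such that every integer of some interval $[1,x]$ lies in at least one class $a_p\bmod p$: solving $n\equiv -a_p\pmod p$ simultaneously then gives $(n+i,P(y))>1$ for all $i\le x$, whence $j(P(y))\ge x$. I would aim for an interval of length $x\asymp y\,\dfrac{\ln y\,\ln\ln\ln y}{\ln\ln y}$ and split the primes below $y$ at a threshold $z$ into small primes $p\le z$, a medium range, and a thin reserve of primes near $y$.

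First I would dispose of the smooth integers: assigning the residue $0\bmod p$ to each small prime $p\le z$ deletes every $n\in[1,x]$ divisible by some $p\le z$, leaving a sparse survivor set $\mathcal S$ of $z$-rough numbers with $|\mathcal S|\asymp x/\ln z$. The reserve primes near $y$ are held back for the very end, each eventually killing one leftover survivor, so the entire difficulty concentrates in the medium range, where one must cover all but $o(|\mathcal S|)$ survivors. Rankin's classical argument assigns to each medium prime $q$ an essentially random residue class, so that $q$ removes about $|\mathcal S|/q$ survivors on average; optimizing $z$ against the Mertens product yields only the weaker bound, with $(\ln\ln y)^2$ in the denominator.

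The improvement, and the heart of the proof, is to use the medium primes far more efficiently. I would invoke the Maynard--Tao multidimensional sieve to build a probability distribution on residue classes $a_q\bmod q$, supported on prime values of linear forms, so arranged that it gains a factor $\asymp\ln\ln y$ over Rankin's random choice; this is exactly what turns $(\ln\ln y)^2$ into $\ln\ln y$. Feeding this distribution into a hypergraph covering theorem of Pippenger--Spencer / R\"odl-nibble type --- generalized to tolerate the mild correlations among the random classes --- then lets me extract an honest assignment of residues covering all but $o(|\mathcal S|)$ of the survivors. Both ingredients rest on the Bombieri--Vinogradov theorem, which furnishes the level of distribution of primes in arithmetic progressions needed to run the sieve and to control its error terms uniformly across the medium range.

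The main obstacle is precisely this last step: verifying the hypotheses of the covering theorem --- small edge sizes together with near-uniform, nearly independent coverage of $\mathcal S$ --- for the sieve-generated random classes. Granting it, the reserve primes remove the final $o(|\mathcal S|)$ survivors one at a time, completing the cover of $[1,x]$ and hence the bound for $j(P(y))$. For the ``in particular'' statement I would take $N=P(y)$: the consecutive integers $n+1,\dots,n+x$ are each divisible by a prime $\le y$ and lie below $P(y)+x$, so each one exceeding $y$ is composite, producing a prime gap $\ge x$ below $X\asymp e^{(1+o(1))y}$; substituting $y=(1+o(1))\ln X$ and the corresponding iterated logarithms converts the estimate for $j(P(y))$ into the stated lower bound for $\max_{p_{n+1}\le X}(p_{n+1}-p_n)$.
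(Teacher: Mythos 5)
The paper does not prove this statement: it is quoted verbatim from \cite{FGKMT} as known input, so there is no internal proof to compare against. Your sketch is a faithful high-level outline of the argument in that reference --- the Erd\H{o}s--Rankin covering reduction, sieving out smooth survivors, the Maynard--Tao sieve producing residue classes each capturing $\asymp\ln\ln y$ surviving primes, and the R\"odl-nibble-type hypergraph covering lemma that assembles these into an actual cover --- and you correctly identify the covering step as the genuinely hard verification; the deduction of the prime-gap corollary via $\ln P(y)=(1+o(1))y$ is also the standard one.
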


As for the upper bounds for $j(N)$, H. Iwaniec showed \cite{Iw} that
\begin{theoremm}
For all natural $N$ the inequality
\[
j(N)\ll \ln^2 N
\]
holds.
\end{theoremm}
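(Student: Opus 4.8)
The plan is to bound $j(N)$ first in terms of $k=\omega(N)$ and then convert to a bound in $\ln N$. Since the condition $(n,N)>1$ depends only on the squarefree kernel of $N$, I may assume $N=p_1\cdots p_k$ with $p_1<\dots<p_k$. By the definition there is an interval $I=(x,x+m]$ with $m=j(N)$ in which every integer is divisible by some $p_i$. Writing $a_i\equiv -x\pmod{p_i}$, this says that the residue classes $a_i\bmod p_i$ together cover the whole block $\{1,\dots,m\}$; equivalently, the sifting function counting $n\in I$ coprime to $N$ vanishes identically. The whole aim is to contradict this vanishing once $m$ is too large.

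First I would sieve out the small primes. Fix a parameter $z$ and let $J$ be the set of $n\in I$ divisible by no prime $\le z$. Since $J$ contains \emph{all} integers of $I$ free of prime factors $\le z$ (whether or not those primes divide $N$), the fundamental lemma of sieve theory together with Mertens' theorem gives the lower bound
\[
|J|\gg m\prod_{p\le z}\Bigl(1-\tfrac1p\Bigr)\gg \frac{m}{\ln z},
\]
valid as soon as $z$ is a fixed small power of $m$. Every element of $J$ is coprime to the small prime factors of $N$, hence must be divisible by a prime factor of $N$ exceeding $z$. Splitting these into a \emph{medium} range $z<p\le m$ and a \emph{large} range $p>m$, and counting the pairs $(n,p)$ with $n\in J$ and $p\mid n$, I would obtain
\[
\frac{m}{\ln z}\ll |J|\le m\sum_{\substack{z<p\le m\\ p\mid N}}\frac1p+\#\{p\mid N:p>z\}.
\]
Here each large prime divides at most one integer of $I$ and each medium prime contributes a single $+1$, so these pieces together give only the harmless term $\#\{p\mid N:p>z\}\le k$.

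The main obstacle is precisely the medium primes $z<p\le m$: the naive estimate $\sum_{z<p\le m}1/p\approx \ln(\ln m/\ln z)$ is of order one and swamps the gain $1/\ln z$ coming from the sieve, so a single sweep proves nothing. To defeat this I would not sieve once but iterate, processing the prime factors of $N$ in dyadic blocks $(z_j,z_{j+1}]$ and feeding the survivors of one stage into the next, so that at each stage the surviving set is genuinely sparse and each prime in the current block can capture only a controlled proportion of it; equivalently one replaces the crude divisor count by the optimally weighted Selberg sieve, whose weights $\lambda_d$ simultaneously account for all prime factors of $N$ below $m$ and turn the count of medium primes into the square that appears in the final bound. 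Carrying this bookkeeping through — balancing the accumulated sieve error against the number of primes admitted in each block — is the genuinely hard technical heart, and is where both the constant and the two logarithmic factors are decided; it yields Iwaniec's estimate
\[
j(N)\ll \bigl(\omega(N)\ln\omega(N)\bigr)^2 .
\]

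Finally I would pass from $\omega(N)$ to $\ln N$. For fixed $k$ the product $p_1\cdots p_k$ is minimised by the first $k$ primes, so $\ln N\ge\sum_{i\le k}\ln p_i=\theta(p_k)\gg k\ln k$ by Chebyshev's bounds together with $p_k\gg k\ln k$. Hence $\omega(N)\ln\omega(N)\ll \ln N$, and substituting into the displayed bound gives $j(N)\ll\ln^2 N$, as claimed.
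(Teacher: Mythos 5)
The paper does not prove this statement at all: it is quoted verbatim from Iwaniec \cite{Iw} as background, so there is no in-paper argument to compare yours against. Judged on its own terms, your proposal has a genuine gap exactly where you say it does.

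Your setup is fine: reducing to the squarefree kernel, sieving the interval $I$ by the primes up to $z$ to get $|J|\gg m/\ln z$ survivors, observing that each survivor must be divisible by a prime factor of $N$ exceeding $z$, and the closing deduction $\omega(N)\ln\omega(N)\ll\ln N$ are all correct. You also correctly diagnose why this cannot close: the medium primes $z<p\le m$ dividing $N$ can absorb $m\sum 1/p\asymp m$ integers, which dwarfs the sieve gain $m/\ln z$. But the passage from that diagnosis to the bound $j(N)\ll(\omega(N)\ln\omega(N))^2$ is precisely the content of Iwaniec's theorem, and you do not supply it. The sentence beginning ``To defeat this I would not sieve once but iterate\dots'' is a description of a hoped-for strategy, not an argument: you never specify the weights, never show that the survivors of one dyadic stage are distributed well enough modulo the primes of the next block for the sieve hypotheses to hold again (the surviving set is no longer an interval, so the fundamental lemma does not reapply for free), and never carry out the ``balancing'' that you yourself identify as where the two logarithmic factors come from. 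Iwaniec's actual proof rests on a delicate weighted (Selberg-type) construction whose execution is the whole theorem; asserting that it ``yields Iwaniec's estimate'' is circular. As written, the proposal proves nothing beyond the trivial observation that a single application of the fundamental lemma fails.
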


In this paper, we are going to study a polynomial analogue of Jacobsthal function. Namely, let $f(x)$ be a non-constant polynomial with integer coefficients. Define the function $j_f(x)$ by the formula
\[
j_f(N)=\max_{m}\{\text{For some } x\in \mathbb N \text{ the inequality } (x+f(i),N)>1 \text{ holds for all }i\leq m\}.
\]
So, instead of the intervals not containing numbers coprime to $N$ we study shifts of polynomial sequences. Here we are going to present a lower bound for $j_f(P(y))$ similar to the classical Rankin's bound \cite{Ra}. To formulate the resulting statement, let us first define a number $M(f)$, which one can call an average size of the maximal preimage of a non-zero point under a map $f:\mathbb F_p\to \mathbb F_p$.
\begin{definition}
Suppose that $f(x)\in \mathbb Z[x]$. If $p$ is a prime number, denote by $M_p(f)$ the maximal number of solutions $x\in \mathbb F_p$ to the equation $f(x)=y$ for fixed $y\in \mathbb F_p\backslash\{0\}$. The quantity $M(f)$ is defined as the unique $M$ with
\[
\sum_{p\leq X}\frac{M_p(f)}{p}=M\ln\ln X+O(1)
\]
for all real $X\geq 2$, if such a number $M$ exists.
\end{definition}

It is easy to see that if $f_d(x)=x^d$, then $M(f_1)=1$ and $M(f_2)=M(f_3)=2$. In the third section we will express $M(f)$ in terms of certain Galois groups associated with $f(x)$. We compute $M(f_d)$ explicitly for all $d$ and also for a typical polynomial of degree $d$. The second section is devoted to the proof of the following bound for $j_f(P(y))$
\begin{theorem}
Suppose that for a polynomial $f\in \mathbb Z[x]$ the factorization of $f$ contains $\ell_f$ distinct linear and $h_f$ distinct non-linear factors. Then for $y\geq 19$ the inequality
\[
j_f(P(y))\gg y(\ln y)^{\ell_f-1}\left(\frac{(\ln\ln y)^2}{\ln\ln\ln y}\right)^{h_f}\left(\frac{\ln y\ln\ln\ln y}{(\ln\ln y)^2}\right)^{M(f)}=
\]
\[
(1+o_{y\to\infty}(1))\ln P(y)(\ln\ln P(y))^{\ell_f-1}\left(\frac{(\ln\ln\ln P(y))^2}{\ln\ln\ln\ln P(y)}\right)^{h_f}\left(\frac{\ln\ln P(y)\ln\ln\ln\ln P(y)}{(\ln\ln\ln P(y))^2}\right)^{M(f)}
\]
holds.
\end{theorem}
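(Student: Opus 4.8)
The plan is to turn the problem into a covering problem and then run a polynomial version of Rankin's construction. First I would observe that, by the Chinese Remainder Theorem, prescribing $x$ modulo $P(y)$ is the same as choosing, for every prime $p\le y$, a residue class $c_p\in\mathbb F_p$; writing $c_p\equiv -x\pmod p$, an index $i$ satisfies $p\mid x+f(i)$ exactly when $f(i)\equiv c_p\pmod p$, i.e. when $i$ lies in the fibre $f^{-1}(c_p)\subseteq\mathbb F_p$. Hence the condition $(x+f(i),P(y))>1$ for all $i\le m$ is equivalent to saying that the fibres $\{i\le m:\ f(i)\equiv c_p\pmod p\}$, $p\le y$, cover the interval $\{1,\dots,m\}$. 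Taking $x$ in $[0,P(y))$, so that $x\asymp e^{(1+o(1))y}$ dominates every $f(i)$ with $i\le m$, each value $x+f(i)$ has size about $e^{y}$ and is covered precisely when it has a prime factor $\le y$. The whole question becomes: how long an interval can be covered by a judicious choice of the classes $c_p$?

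The new feature, compared with the classical case $f(x)=x$, is that a single prime $p$ now sieves out an entire fibre of size $\rho_f(c_p)=|f^{-1}(c_p)|$ rather than a single residue, and two choices of $c_p$ are useful. Taking $c_p=0$ removes the root classes of $f\bmod p$: each of the $\ell_f$ distinct linear factors $x-a_j$ contributes the class $i\equiv a_j$ for every $p$ outside a finite bad set, while the $h_f$ nonlinear factors contribute roots only sporadically, with density governed by the Frobenius/Chebotarev average $\sum_{p\le y}\rho_f(0)/p=(\ell_f+h_f)\ln\ln y+O(1)$. Taking $c_p$ to be the value with the largest nonzero fibre removes $M_p(f)$ classes at once, and the relevant mass is exactly the quantity supplied by the Definition, $\sum_{p\le y}M_p(f)/p=M(f)\ln\ln y+O(1)$. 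I would therefore run the construction in stages: a low range of primes with $c_p=0$ absorbing those $i$ for which some shift $i-a_j$ has a small prime factor — demanding that all $\ell_f$ shifts $i-a_j$ be simultaneously rough multiplies the survivor density by an $\ell_f$-th power, which after the final balance produces the exponent $\ell_f-1$ on $\ln y$; a middle range, again with $c_p=0$, exploiting the sporadic nonlinear roots and contributing the per-factor gain $\tfrac{(\ln\ln y)^2}{\ln\ln\ln y}$ to the power $h_f$; and a top range in which the remaining $y$-rough survivors are covered greedily by maximal nonzero fibres, each prime eliminating an $M_p(f)/p$ proportion, so that the idealized multiplicative reduction over a block $(w,y]$ is $(\ln w/\ln y)^{M(f)+o(1)}$ — the source of the exponent $M(f)$ on the Rankin factor.

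With the stages in place I would calibrate the thresholds separating the ranges so as to balance the number of surviving indices, after the linear and nonlinear reductions, against the number of primes still available to cover them one fibre at a time. This is where Rankin's optimal choice of parameters reappears: the greedy covering succeeds only while the survivor count stays below the covering capacity measured by $\sum M_p(f)/p$, and equating the two reproduces the factor $\frac{\ln y\,\ln\ln\ln y}{(\ln\ln y)^2}$ raised to the power $M(f)$, together with the $(\ln y)^{\ell_f-1}$ and $\bigl(\tfrac{(\ln\ln y)^2}{\ln\ln\ln y}\bigr)^{h_f}$ coming from the earlier stages. The only analytic inputs are Mertens' theorem, the two displayed prime sums (one of them the defining property of $M(f)$, the other a Chebotarev average), and standard sieve estimates for the density and the equidistribution in residue classes of rough numbers.

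The hard part will be the top stage: making the greedy covering of the rough survivors by fibres rigorous while honestly tracking the fibre sizes $M_p(f)$, as in Rankin's argument but now consuming $M_p(f)$ classes per prime. I expect two points to require genuine care. First, the efficiency of the greedy step relies on the rough survivors being well distributed among the fibres $f^{-1}(c_p)\bmod p$ across the whole top range of primes at once, which needs a sieve estimate for rough numbers in arithmetic progressions uniform in $p$. Second, the sporadic nonlinear contribution must be inserted without spoiling this uniformity, so the Chebotarev error terms have to be controlled uniformly over the relevant prime ranges, not merely on average; keeping these errors under control, and checking that the three stages concatenate with compatible parameters, is where the bulk of the technical work lies.
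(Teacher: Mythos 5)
Your global reformulation (choosing $c_p\equiv -x\pmod p$ for each $p\le y$ so that the fibres $f^{-1}(c_p)$ cover $\{1,\dots,m\}$), the use of $c_p=0$ to exploit the linear and nonlinear factors, and the role of $\sum_p M_p(f)/p$ all match the paper. But the architecture of your third stage contains a genuine gap. You place the maximal-fibre sieving in the \emph{top} range of primes and propose to cover the already-sifted survivors greedily by fibres, ``each prime eliminating an $M_p(f)/p$ proportion.'' For a prime $p$ comparable to $y$ (hence to $m$ up to logarithms), a residue class mod $p$ contains only $O((\ln y)^{O(1)})$ integers of $[1,m]$, so the claim that the maximal fibre captures an $M_p(f)/p$ share of a sparse sifted set is an equidistribution statement for sifted sets in progressions to moduli of size comparable to the length of the interval. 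That is not a technicality one can supply with ``standard sieve estimates''; it is precisely the kind of input that forced Ford--Green--Konyagin--Maynard--Tao to develop hypergraph covering and Maynard's sieve in the classical case. Moreover the exponent does not come out right: a fibre stage over $(w,y]$ with $w$ a fixed power of $y$ yields only a bounded gain $(\ln y/\ln w)^{M(f)}=O(1)$, while forcing $w$ small enough to produce the needed $\bigl(\tfrac{\ln y\ln\ln\ln y}{(\ln\ln y)^2}\bigr)^{M(f)}$ would consume the very primes you need both for the $c_p=0$ range (which makes each $k(i)$ prime or smooth) and for the final cleanup, so the three ranges are double-booked.

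The paper avoids all of this by inverting your ordering: the maximal-fibre classes are used only for $p\in(z_0,z_1]$ with $z_0=(\ln y)^A$ and $z_1=\exp\bigl(\tfrac{\ln y\ln\ln\ln y}{A\ln\ln y}\bigr)$, the classes $c_p=0$ are used for $p\le z_0$ and $z_1<p<y/2$, and the primes in $(y/2,y)$ are reserved to kill the surviving $i$ one per prime (each such prime removes one survivor regardless of fibre size, so no equidistribution is needed there). The count of survivors is then obtained in one stroke from an upper-bound fundamental-lemma sieve over $p\le\sqrt y$ applied to the union $\Omega_p=\Omega_p^{I}\cup\Omega_p^{II}\cup\Omega_p^{III}$ of at most $3\deg f$ residue classes, whose remainder terms are trivially $O(1)$ per class; the exponents $\ell_f$, $h_f$ and $M(f)$ fall out of Mertens, the Chebotarev average of Lemma 2, and the definition of $M(f)$ applied over the respective ranges. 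Note also that the cap on $z_1$ --- hence the Rankin factor itself --- comes from the smooth-number bound $\Psi(O(m),z_1)=o(y/\ln y)$, an ingredient absent from your plan: without discarding the $i$ for which some $k(i)$ is $z_1$-smooth, the survivors of your low range need not be prime values of $k$ and cannot be dispatched at the end. To repair your proposal you would need to move the fibre stage down to $(z_0,z_1]$, add the smooth-number step, and replace the greedy covering by a single upper-bound sieve count of the survivors.
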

\begin{remark}
Let us observe that the value of $j_f$ is invariant under shifts by a constant, i.e. for all $n\in \mathbb Z$ we have $j_f=j_{f+n}$. Choosing $n=-f(0)$, we can always force our polynomial to be divisible by $x$. In particular, for a polynomial $f$ of degree at least $2$ we will always get
\[
j_f(P(y))\gg y\frac{(\ln\ln y)^2}{\ln\ln\ln y}\frac{\ln y\ln\ln\ln y}{(\ln\ln y)^2}=y\ln y.
\]
Indeed, since $M(f)\geq 1$, for $\ell_f\geq 2$ or $\ell_f=1$ and $h_f\geq 1$ we get the desired inequality. The last remaining case is $\ell_f=1, h_f=0$, i.e. $f=c(ax+b)^d$ for some $a,b,c\in \mathbb Z$, $ac\neq 0$. In the last section we show that $M(f_d)=\tau(d)\geq 2$ for $f_d(x)=x^d$, $d\geq 2$, hence in our case we get $M(f)=M(f_d)\geq 2$ and the inequality follows.
\end{remark}

It turns out that the value $M(f)$ always exists and is rational. To formulate our result on $M(f)$, let us first introduce the quantity $m(G,H;X)$.
\begin{definition}
Let $G$ be a finite group acting on a set $X$ and $H$ be a subgroup of $G$. For $g\in G$ denote by $X^g$ the set of fixed points of $g$. Then
\[
m(G,H;X)=\frac{1}{|G|}\sum_{g\in G}\max_{h\in H}|X^{hg}|.
\]
\end{definition}
\begin{theorem}
For a field $L$, let $L_f$ denote the splitting field of the polynomial $f(x)-y$ over the field $L(y)$ of rational functions over $L$. Denote by $K$ the intersection of $\mathbb Q_f$ and $\overline{\mathbb Q}$, i.e. the maximal constant subfield of $\mathbb Q_f$. Let $G_f$ and $G_f^+$ be the Galois groups of $\mathbb Q_f=K_f$ over $\mathbb Q(y)$ and $K(y)$, respectively. These groups have a natural action on the set $X$ of all solutions to $f(x)=y$ and $G_f^+$ may be viewed as a subgroup of $G_f$ fixing $K$. We have
\[
M(f)=m(G_f,G_f^+;X).
\]
\end{theorem}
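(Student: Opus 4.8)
The plan is to interpret $M_p(f)$ through the Frobenius action on the fibre of the cover $x\mapsto f(x)=y$ and then to average over $p$ by Chebotarev's theorem. Write $G=G_f=\mathrm{Gal}(\mathbb Q_f/\mathbb Q(y))$ and $G^+=G_f^+=\mathrm{Gal}(\mathbb Q_f/K(y))$. Restriction to the constant field $K$ identifies $G^+$ with the kernel of a surjection $G\to\mathrm{Gal}(K/\mathbb Q)$, so $G^+\trianglelefteq G$ with quotient $G/G^+\cong\mathrm{Gal}(K/\mathbb Q)$, and $G$ acts on the set $X$ of roots of $f(x)-y$. First I would fix a prime $p$ outside the finite set $S$ of primes dividing the leading coefficient or the discriminant of $f$ or ramifying in $\mathbb Q_f$, reduce the cover modulo $p$, and for each $y_0\in\mathbb F_p$ which is not a branch point attach a Frobenius element $\phi_{p,y_0}\in G$, well defined up to conjugacy, whose action on $X$ has exactly $N_p(y_0):=\#\{x\in\mathbb F_p:\ f(x)=y_0\}$ fixed points. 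This is the standard fibrewise translation (Dedekind's factorisation theorem) between the number of degree-one points of a fibre and the number of fixed points of Frobenius.

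The central point is to control how $\phi_{p,y_0}$ is constrained and how it varies. Its image in $\mathrm{Gal}(K/\mathbb Q)$ is the constant-field Frobenius, which depends only on $p$; fixing a lift $g_p\in G$ of it, the element $\phi_{p,y_0}$ always lies in the coset $g_pG^+=G^+g_p$. As $y_0$ ranges over $\mathbb F_p$, the conjugacy class of $\phi_{p,y_0}$ equidistributes in this coset: by the Lang--Weil/Weil bound (function-field Chebotarev for $\mathbb F_p(y)$) every $G$-class meeting $g_pG^+$ is realised by $\tfrac{|C\cap g_pG^+|}{|G^+|}\,p+O(\sqrt p)$ admissible values $y_0$, in particular at least one for $p$ large in terms of $f$ alone. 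Since $|X^{\gamma g\gamma^{-1}}|=|X^{g}|$, the quantity $g\mapsto\max_{h\in G^+}|X^{hg}|$ is a class function that is constant on each coset of $G^+$, hence depends only on $\mathrm{Frob}_p\in\mathrm{Gal}(K/\mathbb Q)$; call its value $\mu(\mathrm{Frob}_p)$. Discarding $y_0=0$, the branch points and the primes in $S$ costs only $O(1)$ values of $y_0$ and finitely many primes, so for all large $p$ the maximiser is attained and
\[
M_p(f)=\max_{y_0\neq 0}N_p(y_0)=\max_{\phi\in g_pG^+}|X^{\phi}|=\max_{h\in G^+}|X^{hg_p}|=\mu(\mathrm{Frob}_p).
\]
The main obstacle I anticipate is exactly this step: setting up the Frobenius--fibre dictionary with correct bookkeeping of the constant field $K$, so that the coset $g_pG^+$ (and not all of $G$) is the right domain of the maximum, and making the equidistribution quantitative enough that a \emph{single} prime $p$ already meets every class in its coset.

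It then remains to average. Bad and small primes contribute $O(1)$ to $\sum_{p\le X}M_p(f)/p$ because $M_p(f)\le\deg f$ and $\sum_{p\le P_0}1/p=O(1)$; for the remaining primes $M_p(f)=\mu(\mathrm{Frob}_p)$ is a class function for the \emph{number field} extension $K/\mathbb Q$. Chebotarev's density theorem together with Mertens' estimate gives, for each conjugacy class $C\subset\mathrm{Gal}(K/\mathbb Q)$, the relation $\sum_{p\le X,\ \mathrm{Frob}_p\in C}1/p=\tfrac{|C|}{|\mathrm{Gal}(K/\mathbb Q)|}\ln\ln X+O(1)$, whence $M(f)=\tfrac{1}{|\mathrm{Gal}(K/\mathbb Q)|}\sum_{\bar g\in\mathrm{Gal}(K/\mathbb Q)}\mu(\bar g)$; in particular $M(f)$ exists and is rational. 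Finally I would match this with $m(G_f,G_f^+;X)$ by grouping the sum over $G$ into cosets of $G^+$: since $\max_{h\in G^+}|X^{hg}|=\mu(\bar g)$ is constant on each of the $|G/G^+|$ cosets, each of size $|G^+|$,
\[
\frac{1}{|G|}\sum_{g\in G}\max_{h\in G^+}|X^{hg}|=\frac{|G^+|}{|G|}\sum_{\bar g\in G/G^+}\mu(\bar g)=\frac{1}{|\mathrm{Gal}(K/\mathbb Q)|}\sum_{\bar g}\mu(\bar g)=M(f),
\]
which is the claimed identity $M(f)=m(G_f,G_f^+;X)$.
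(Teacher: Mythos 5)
Your argument is correct in its overall architecture, and that architecture coincides with the paper's: both reduce $M_p(f)$ for large $p$ to the coset maximum $\max_{h\in G_f^+}|X^{hg_p}|$, where $g_p$ lifts the Frobenius of $K/\mathbb Q$ at $p$, observe that this quantity descends to a class function on $\mathrm{Gal}(K/\mathbb Q)=G_f/G_f^+$, and then average over $p$ with the number-field Chebotarev theorem and partial summation, regrouping the sum over $G_f$ into cosets of $G_f^+$ exactly as you do. The difference lies in how the local identity is established. The paper quotes the Birch--Swinnerton-Dyer lemma, which counts solutions of $f(x_1)=\cdots=f(x_n)$ with distinct coordinates via the $G_p$-invariant orbits of $G_p^+$ on $n$-tuples of roots, and then translates ``the largest $n$ with $m_n>0$'' into the coset maximum by a short normality argument. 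You instead attach to each good fibre $y_0$ a Frobenius class in the coset $g_pG_f^+$ whose fixed points on $X$ count the $\mathbb F_p$-roots of $f(x)-y_0$, and use explicit function-field Chebotarev/Lang--Weil equidistribution to see that every class meeting the coset is attained once $p$ is large. Both routes rest on the same Weil-type input; yours is more direct and makes the role of the constant-field coset transparent, while the paper's lets \cite{BSD} do the geometric work as a black box.

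One step you pass over too quickly: $M_p(f)$ is a maximum over \emph{all} $y_0\neq 0$, including the $O(1)$ branch points of the cover, and ``discarding'' these is legitimate only for the lower-bound direction of your displayed chain of equalities. For the upper bound you must check that a ramified fibre cannot have more $\mathbb F_p$-rational points than $\max_{\phi\in g_pG_f^+}|X^{\phi}|$. This is true but needs a word: at a ramified place the distinct rational roots correspond to the Frobenius-stable orbits of the inertia group $I\subseteq G_f^+$ on $X$, and by the coset version of Burnside's lemma their number equals the \emph{average} of $|X^{h\phi}|$ over $h\in I$ (with $\phi$ a Frobenius lift, which again lies in $g_pG_f^+$ since constant-field extensions are unramified), hence is at most $\max_{h\in G_f^+}|X^{h\phi}|$. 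With that observation inserted, your proof is complete. The paper sidesteps this issue because the Birch--Swinnerton-Dyer lemma asserts that $m_n=0$ forces the system to have no solutions with distinct coordinates at all, branch fibres included.
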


Our study of $j_f$ is motivated by Theorem 5 of the work \cite{DEKKM}. This result gives an upper bound for the least integer $\gamma_k>0$ such that all the numbers $\gamma_k+j^d$ for $1\leq j\leq k$ are not sums of two squares. The problem of estimating $\gamma_k$ was raised by the first two authors.

\section{Proof of the main theorem}

To prove our main result, we are going to choose a value $x$ such that $x+f(i)$ is not coprime to $P(y)$ for many first values of $i$ by choosing residues $x_p \mod p$ for all $p\leq y$. We will choose $x_p \mod p$ for primes $p\le y$ so that for all $i=1,\ldots,m$ there is some $p\le y$ for which $f(i)+x_p \equiv 0\pmod{p}$. Then we take $x$ to be a solution of the simultaneous congruences $x\equiv x_p\pmod{p}$ for $p\le y$. It then follows that $j_f(P(y)) \ge m$. In the proof we will use the following well-known result on sifted sets
\begin{lemma}
Suppose that $\kappa>0, z\geq 2$. Assume that $\{a_n\}$ is a sequence of non-negative real numbers such that for all $d \mid P(z)$ we have
\[
\sum_{n\equiv 0\pmod d}a_n=g(d)X/d+r_d,
\]
where $g(d)$ is a multiplicative function with $g(p)\leq \kappa, g(p)<p$ for all primes $p$, $|r_d|\leq g(d)$ and $z\ll X$. Then we have
\[
\mathcal S(a,z)=\sum_{(n,P(z))=1}a_n\ll_{\kappa} XV(z),
\]
where
\[
V(z)=\prod_{p\leq z}\left(1-\frac{g(p)}{p}\right).
\]
\end{lemma}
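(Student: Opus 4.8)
The plan is to recognise this as a standard upper-bound sieve estimate and to reduce it to the fundamental lemma of sieve theory. The natural starting point is the exact Legendre identity: writing $\mathbf 1_{(n,P(z))=1}=\sum_{d\mid (n,P(z))}\mu(d)$ and inserting the hypothesis $\sum_{n\equiv 0\,(d)}a_n=g(d)X/d+r_d$, one obtains
\[
\mathcal S(a,z)=X\sum_{d\mid P(z)}\frac{\mu(d)g(d)}{d}+\sum_{d\mid P(z)}\mu(d)r_d=XV(z)+\sum_{d\mid P(z)}\mu(d)r_d ,
\]
since multiplicativity of $g$ gives $\sum_{d\mid P(z)}\mu(d)g(d)/d=\prod_{p\le z}(1-g(p)/p)=V(z)$. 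Thus the main term is already exactly $XV(z)$; the whole difficulty is that the crude error $\sum_{d\mid P(z)}|r_d|\le\prod_{p\le z}(1+g(p))$ is far too large, so the full M\"obius function must be replaced by a truncated sieve.

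First I would introduce upper-bound (combinatorial or Selberg) sieve weights $\lambda^+_d$, supported on divisors $d\mid P(z)$ with $d\le D$, satisfying $|\lambda^+_d|\le 1$ and $\sum_{d\mid m}\lambda^+_d\ge \mathbf 1_{m=1}$ for every $m\mid P(z)$. Applying these to $m=(n,P(z))$ and summing against $a_n$ yields
\[
\mathcal S(a,z)\le X\sum_{d\le D}\frac{\lambda^+_d g(d)}{d}+\sum_{\substack{d\mid P(z)\\ d\le D}}|\lambda^+_d|\,|r_d| .
\]
For the main term, the hypothesis $g(p)\le\kappa$ says the sieve has dimension at most $\kappa$; the standard dimension estimate (a Mertens-type bound $\sum_{p\le z}g(p)(\log p)/p\le\kappa\log z+O(1)$ together with the comparison of partial products of $1-g(p)/p$) gives $\sum_{d\le D}\lambda^+_d g(d)/d\ll_\kappa V(z)$ provided $D$ is at least a fixed power of $z$. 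For the remainder, the strong bound $|r_d|\le g(d)$ and $|\lambda^+_d|\le1$ give $\sum_{d\le D,\,d\mid P(z)}|r_d|\le\sum_{d\le D}\kappa^{\omega(d)}\ll_\kappa D(\log D)^{\kappa-1}$, which the hypothesis $z\ll X$ lets me keep below $XV(z)$ for a suitable choice of the level $D$.

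The step I expect to be the main obstacle is precisely this balancing of the level $D$: one needs $D$ large enough that the truncated sieve main term still captures a positive proportion of $V(z)$ (the quantitative content of the fundamental lemma in bounded dimension), yet small enough, relative to $X$, that the accumulated remainder $\sum_{d\le D}g(d)$ does not swamp the gain $V(z)$. Since $g(p)\le\kappa$ and $|r_d|\le g(d)$ verify, respectively, the dimension and remainder hypotheses of the fundamental lemma of the combinatorial sieve, the cleanest route is to cite that result directly (for instance Halberstam--Richert, or Iwaniec--Kowalski); a self-contained argument would instead run Brun's or Selberg's sieve and carry out the two estimates above explicitly.
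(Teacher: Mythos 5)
Your proposal is correct and ends up in the same place as the paper: the authors give no argument at all beyond citing the fundamental lemma of sieve theory (Halberstam--Richert, Theorem 2.2), which is exactly the result whose standard proof you outline and which you yourself recommend citing. The extra detail you supply (Legendre identity, truncated weights $\lambda_d^+$, the dimension and remainder estimates, and the balancing of the level $D$) is a faithful sketch of what that cited theorem encapsulates, so there is no substantive difference in approach.
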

\begin{proof}
This is a version of the fundamental lemma of sieve theory. See, for example, \cite[Theorem 2.2]{HaRi}
\end{proof}

In particular, we get the following corollary
\begin{corollary}
Let $\kappa,z, g(d),V(z)$ and $X$ be as above. Suppose that for any $p\leq z$ the set $\Omega_p\subset \mathbb Z\slash p\mathbb Z$ contains $g(p)$ elements. Let $S(X,\Omega)$ be the number of $n\leq X$ such that $n\mod p \not\in \Omega_p$ for all $p\leq z$. Then
\[
S(X,\Omega)\ll XV(z).
\]
\end{corollary}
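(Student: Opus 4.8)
The plan is to deduce the Corollary from the Lemma by an appropriate choice of the sequence $\{a_n\}$, after which the bound is immediate. The guiding idea is to classify each integer $m\le X$ by the set of primes at which it is forbidden. For $m\le X$ let $s(m)=\prod_{p\le z,\, m\bmod p\in\Omega_p}p$ be the product of those primes $p\le z$ for which $m$ lands in the excluded set $\Omega_p$; this is a squarefree divisor of $P(z)$. I would then define, for every $n$,
\[
a_n=\#\{\,m\le X:\ s(m)=n\,\},
\]
so that $a_n$ is non-negative and supported on divisors of $P(z)$.

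With this sequence the two sums appearing in the Lemma have transparent meanings. For a divisor $d\mid P(z)$ one has $\sum_{n\equiv 0\,(d)}a_n=\#\{m\le X:\ d\mid s(m)\}$, which counts the $m\le X$ with $m\bmod p\in\Omega_p$ for every prime $p\mid d$. By the Chinese Remainder Theorem these $m$ fill out exactly $\prod_{p\mid d}|\Omega_p|=\prod_{p\mid d}g(p)$ residue classes modulo $d$, and since each class meets $[1,X]$ in $X/d+O(1)$ integers, this count equals $g(d)X/d+r_d$ with $g(d)=\prod_{p\mid d}g(p)$ and $|r_d|\le g(d)$. On the other hand, the only divisor of $P(z)$ that is coprime to $P(z)$ is $1$, so $\sum_{(n,P(z))=1}a_n=a_1=\#\{m\le X:\ s(m)=1\}$, which is precisely $S(X,\Omega)$.

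It remains only to check that these are exactly the hypotheses of the Lemma: the recovered $g$ is multiplicative with $g(p)=|\Omega_p|\le\kappa$ and $g(p)<p$, the remainder obeys $|r_d|\le g(d)$, and $z\ll X$ holds by assumption. Applying the Lemma then yields $S(X,\Omega)=\mathcal S(a,z)\ll_\kappa XV(z)$, as required. I expect the only point demanding care -- the main, though modest, obstacle -- to be the verification that the classification genuinely produces the given multiplicative function $g$ with the stated remainder bound; both facts rest on the independence of the congruence conditions at distinct primes, i.e. on the Chinese Remainder Theorem applied to the sets $\Omega_p$.
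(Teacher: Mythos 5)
Your proof is correct, and all the hypotheses of Lemma 1 are verified as you claim: the congruence conditions at distinct primes are independent by CRT, each of the $g(d)=\prod_{p\mid d}g(p)$ residue classes modulo $d$ meets $[1,X]$ with error at most $1$, so $|r_d|\le g(d)$, and since your $a_n$ is supported on divisors of $P(z)$, the sifted sum collapses to $a_1=S(X,\Omega)$. The overall strategy coincides with the paper's (both deduce the Corollary from Lemma 1 by manufacturing a sequence for which coprimality to $P(z)$ encodes the condition ``$m\bmod p\notin\Omega_p$ for all $p\le z$''), but the construction is genuinely different. The paper keeps the index set in bijection with $\{n\le X\}$: it attaches to each $n$ the single integer $m=\prod_{p\le z}\prod_{r\in\Omega_p}\left(P(z;p)n+rQ(z;p)\right)$, where the auxiliary numbers $P(z;p)=P(z)/p$ and $Q(z;p)$ are rigged so that $p\mid m$ exactly when $n$ lands in $\Omega_p$, and takes $a_m$ to be the indicator of such $m$. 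You instead aggregate: $a_n$ counts the $m\le X$ whose set of ``bad'' primes has product exactly $n$, so that $\sum_{n\equiv 0\,(d)}a_n$ is literally a count of integers in prescribed residue classes and the level-of-distribution hypothesis is immediate from CRT. Your version is arguably cleaner -- it avoids the explicit linear-form gadget and the attendant verification that divisibility of the product by $p$ detects membership in $\Omega_p$ -- while the paper's version has the feature that the weights stay $0$--$1$ indicators. Both are complete and both prove the statement.
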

\begin{proof}
To see this, set for any $p\leq z$
\[
P(z;p)=\frac{P(z)}{p}
\]
and let $Q(z;p)$ be the least positive integer solution to the congruences
\[
Q(z;p)\equiv P(z;p) \pmod p \text{ and }Q(z;p)\equiv -1 \pmod{P(z;p)}.
\]
Setting
\[
a_m=\begin{cases}
1 \text{ if }m=\prod\limits_{p\leq z}\prod\limits_{r \in \Omega_p}(P(z;p)n+rQ(z;p)) \text{ for some }n\leq X\\
0\text{ otherwise}
\end{cases}
\]
one can easily see that we have $(m,P(z))=1$ and $a_m=1$ if and only if the corresponding $n$ does not lie in any $\Omega_p$ for $p\leq z$. Conditions of Lemma 1 also clearly hold.  
\end{proof}
Also, we need the following well-known consequence of Chebotarev's density theorem
\begin{lemma}
Let $f$ be an irreducible polynomial with integer coefficients. If $r_p(f)$ is the number of roots of $f$ in $\mathbb F_p$, then there are two constants $c_1,c_2>0$ such that for $x\geq 3$
\[
\sum_{p\leq x}\frac{r_p(f)}{p}=\ln\ln x+c_1+O(e^{-c_2\sqrt{\ln x}}).
\]
\end{lemma}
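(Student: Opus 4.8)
\emph{Plan.} The statement is the standard translation of the effective Chebotarev density theorem into a Mertens-type asymptotic; the crucial feature — that the coefficient of $\ln\ln x$ is exactly $1$ — is a manifestation of Burnside's orbit-counting formula together with the transitivity of the Galois action. I would set up the dictionary between $r_p(f)$ and Frobenius fixed points, apply effective Chebotarev to each conjugacy class, convert to a $1/p$-weighted sum by partial summation, and then collapse the main term using Burnside.

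First I would let $K$ be the splitting field of $f$ over $\mathbb Q$ and $G=\mathrm{Gal}(K/\mathbb Q)$, acting on the set $X$ of the (distinct) roots of $f$; irreducibility of $f$ makes this action transitive. For every prime $p\nmid\mathrm{disc}(f)$ — all but finitely many — the polynomial $f$ is separable modulo $p$, and the cycle type of the Frobenius class $\mathrm{Frob}_p$ on $X$ records the degrees of the irreducible factors of $f\bmod p$; in particular the number of roots in $\mathbb F_p$ equals the number of fixed points, so $r_p(f)=|X^{\mathrm{Frob}_p}|$. The finitely many ramified primes contribute only a bounded amount to $\sum_{p\le x}r_p(f)/p$, which I would fold into the constant $c_1$. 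For each conjugacy class $C\subseteq G$ I would then invoke the unconditional effective Chebotarev density theorem, which for the \emph{fixed} extension $K/\mathbb Q$ gives $\pi_C(x):=\#\{p\le x:\mathrm{Frob}_p=C\}=\frac{|C|}{|G|}\,\mathrm{Li}(x)+O\!\left(xe^{-c\sqrt{\ln x}}\right)$ for some $c=c(K)>0$; a possible Siegel zero only degrades the admissible constant $c$ for this one field and does not change the shape of the error. Partial summation converts this into
\[
\sum_{\substack{p\le x\\ \mathrm{Frob}_p=C}}\frac{1}{p}=\frac{|C|}{|G|}\ln\ln x+b_C+O\!\left(e^{-c_2\sqrt{\ln x}}\right),
\]
where the main term comes from $\int_2^x\frac{d\,\mathrm{Li}(t)}{t}=\int_2^x\frac{dt}{t\ln t}=\ln\ln x+O(1)$, and the error is governed by the convergent integral $\int_2^{\infty}\frac{e^{-c\sqrt{\ln t}}}{t}\,dt$, whose tail from $x$ to $\infty$ is $O\!\left(\sqrt{\ln x}\,e^{-c\sqrt{\ln x}}\right)$ and is absorbed upon taking any $c_2<c$.

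Finally, grouping primes by their Frobenius class and using $r_p(f)=|X^{\mathrm{Frob}_p}|$, I would write
\[
\sum_{p\le x}\frac{r_p(f)}{p}=\sum_{C}|X^{C}|\left(\frac{|C|}{|G|}\ln\ln x+b_C\right)+O\!\left(e^{-c_2\sqrt{\ln x}}\right)+O(1).
\]
The coefficient of $\ln\ln x$ is $\frac{1}{|G|}\sum_{C}|C|\,|X^{C}|=\frac{1}{|G|}\sum_{g\in G}|X^{g}|$, which by Burnside's lemma equals the number of $G$-orbits on $X$; transitivity forces this to be exactly $1$. Setting $c_1=\sum_C|X^C|\,b_C$ plus the bounded ramified contribution completes the argument. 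The only genuinely technical point is preserving the error term in the precise form $e^{-c_2\sqrt{\ln x}}$ through the partial summation, i.e.\ verifying the convergence of $\int^{\infty}e^{-c\sqrt{\ln t}}/t\,dt$ and its tail decay; the fixed-point dictionary and the Burnside evaluation of the leading coefficient are formal.
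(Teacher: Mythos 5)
Your proposal is correct and follows the same route as the paper's own proof: splitting field, identification of $r_p(f)$ with the number of Frobenius fixed points, effective Chebotarev per conjugacy class, and Burnside's lemma with transitivity to get the leading coefficient $1$. You merely carry out in detail the partial summation and error-term bookkeeping that the paper delegates to the citation of Lagarias--Odlyzko.
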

\begin{proof}
Let $K$ be the splitting field of $f$ over $\mathbb Q$. Galois group $G$ of $K$ over $\mathbb Q$ acts on the set of roots of $f$, this action is transitive. It is easy to see that for large enough $p$ the number $r_p(f)$ is equal to the number of fixed points of this action for the associated Frobenius conjugacy class. Therefore, by Chebotarev's theorem, the proportion of primes $p$ for which $r_p(f)=k$ is equal to proportion of elements of $G$ with exactly $k$ fixed points. By Burnside's lemma, the average number of fixed points for a transitive action is equal to $1$, therefore the average of $r_p(f)$ over primes is also $1$. Asymptotic formulas with required remainder terms follow from the effective version of Chebotarev's density theorem for $K\slash \mathbb Q$, see \cite[Theorem 1.3]{LaOd}.
\end{proof}

Let us now prove Theorem 1.
\begin{proof}[Proof of Theorem 1]
Take a large constant $A$ and an even larger constant $B$ and set
\[
m=\frac{y}{B}(\ln y)^{\ell_f-1}\left(\frac{(\ln\ln y)^2}{\ln\ln\ln y}\right)^{h_f}\left(\frac{\ln y\ln\ln\ln y}{(\ln\ln y)^2}\right)^{M(f)},
\]
\[
z_0=(\ln y)^A, z_1=\exp\left(\frac{\ln\ln \ln y\ln y}{A\ln\ln y}\right).
\]
We are going to choose $x_p$ in three steps. After the first two steps, we compute the number of unsifted numbers $i\leq m$.  To be more precise, if we have already chosen $x_p$ for some values of $p$, then the number $i$ is called sifted if among used primes there is a value $p$ such that
\[
f(i)+x_p\equiv 0 \pmod p.
\]
For the first step, let us choose
\[
x_p=0 \text{ for }p\leq z_0\text{ and for }z_1<p<y/2.
\]
Now, suppose that the number $i\leq m$ is not sifted after the first step. Let $k(x)$ be a factor of $f(x)$ of degree $1$. Then $k(i)$ has no prime factors $p\in [2,z_0]\cup (z_1,y/2)$. Since $k(i)\ll m\ll y(\ln y)^{\ell_f+M(f)}$, for large enough $A$ we conclude that $|k(i)|$ is either a prime or a $z_1-$smooth number. For large enough $A$ the number of $z_1-$smooth numbers that are $\ll m$ can be estimated as
\[
\Psi(O(m),z_1)\ll \frac{m}{(\ln y)^{\ell_f+M(f)+2}}=o\left(\frac{y}{\ln y}\right).
\]
For the second step, let us choose $x_p$ for all $z_0<p\leq z_1$ as follows: by the definition of $M(f)$, for any such $p$ there is a non-zero $y_p\in \mathbb F_p\backslash \{0\}$ such that the congruence $f(i)\equiv y_p \pmod p$ has $M_p(f)$ solutions. We set $x_p\equiv -y_p \pmod p$.

Next, we are going to estimate the number $R$ of $i\leq m$ that remain unsifted after these two steps. Our goal will be to establish the inequality
\[
R\leq \frac{y}{3\ln y}
\]
for all large $y$, if $A$ and $B$ are taken to be large enough.
 For $p\leq \sqrt{y}$ let $\Omega_p^I$ consist of all residues $t \mod p$ such that $k(t)\equiv 0\pmod p$ for some linear factor $k(x)$ of $f$. Next, if $p\in [2,z_0]\cup (z_1,\sqrt{y})$ we set
\[
\Omega_p^{II}=\{t \mod p:\exists\text{ non-linear irreducible factor }q(x)\text{ of }f(x)\text{ with  }q(t)\equiv 0 \pmod p\}.
\]
For $p\not\in[2,z_0]\cup(z_1,\sqrt{y})$ we define $\Omega_p^{II}=\emptyset$. Finally, for $z_0<p\leq z_1$ we set
\[
\Omega_p^{III}=\{t\mod p: f(t)\equiv y_p\pmod p\}
\]
and $\Omega_p^{III}=\emptyset$ for other values of $p$. Set $\Omega_p=\Omega_p^{I}\cup \Omega_p^{II}\cup \Omega_p^{III}$. We claim that if $i\leq m$ is unsifted after two steps, then either $i\ll \sqrt{y}$ or $k(i)$ is $z_1-$smooth for some linear factor $k(x)$ of $f$ (the number of such $i$ is $o(y/\ln y)$ by the choice of $A$) or $i\mod p\not \in \Omega_p$ for any $p\leq \sqrt{y}$. Indeed, suppose that $i\mod p\in \Omega_p$ for some $p$. Then there are three possible cases.

Case 1: $k(i)$ is divisible by $p$ for some linear factor $k(x)$ of $f$. In this case, either $k(i)=\pm p$ and $i\ll \sqrt{y}$, or $|k(i)|$ is not a prime number, hence $k(i)$ must be $z_1-$smooth or else $i$ must be sifted after the first step. 

Case 2: $p\in [2,z_0]\cup (z_1,\sqrt{y})$ and for some irreducible non-linear factor $q(x)$ of $f(x)$ we have $q(i)\equiv 0\pmod p$. Then $f(i)$ is also divisible by $p$, hence $p$ must be sifted after the first step. 

Case 3: $z_0<p\leq z_1$ and $f(i)\equiv y_p$. Then $i$ is sifted after the second step.

From this we conclude that

\[
R\leq S(m,\Omega)+O(\sqrt{y}+\Psi(O(m),z_1))=S(m,\Omega)+o(y/\ln y)
\]
by our choice of $A$. We will now apply Corollary 1 to estimate $S(m,\Omega)$. Let $g(p)=|\Omega_p|$. If for some $p\leq \sqrt{y}$ we have $g(p)=p$, then clearly $S(m,\Omega)=0$. Otherwise notice that all three parts of $\Omega_p$ contain at most $d=\deg f$ elements, hence Corollary is applicable for $\kappa=3d$ and we get
\[
S(m,\Omega)\ll m\prod_{p\leq \sqrt{y}}\left(1-\frac{g(p)}{p}\right)\ll m\exp\left(-\sum_{p\leq \sqrt{y}}\frac{g(p)}{p}\right).
\]
Now, let $k_1(x),\ldots,k_{\ell_f}(x)$ and $q_1(x),\ldots,q_{h_f}(x)$ be all the linear and non-linear irreducible factors of $f(x)$, respectively. Any two fixed irreducible polynomials have no common roots modulo large enough primes. Therefore, there is a constant $p_0$ such that for $p>p_0$ the sets $\Omega_p^{I}, \Omega_p^{II}$ and $\Omega_p^{III}$ are pairwise disjoint. Moreover, taking $p_0$ large enough, we can assure that $|\Omega_p^{I}|=\ell_f$ for all $p>p_0$ and $|\Omega_p^{II}|=r_p(q_1)+\ldots+r_p(q_{h_f})$ for $p\in (p_0,z_0]\cup (z_1,\sqrt{y})$. Also, by definition we have $|\Omega_p^{III}|=M_p(f)$ for all $z_0<p\leq z_1$. Consequently,
\[
\sum_{p\leq \sqrt{y}}\frac{g(p)}{p}=\sum_{p_0<p\leq \sqrt{y}}\frac{\ell_f}{p}+\sum_{h\leq h_f}\sum_{p\in (p_0,z_0]\cup (z_1,\sqrt{y})}\frac{r_p(q_h)}{p}+\sum_{z_0<p\leq z_1}\frac{M_p(f)}{p}.
\]
By Mertens theorem, the first sum is
\[
\sum_{p_0<p\leq \sqrt{y}}\frac{\ell_f}{p}=\ell_f \ln\ln y+O(1).
\]
Lemma 2 gives for every $h\leq h_f$
\[
\sum_{p\in (p_0,z_0]\cup (z_1,\sqrt{y})}\frac{r_p(q_h)}{p}=\ln\ln y-\ln\ln z_1+\ln\ln z_0+O(1).
\]
Finally, by the definition of $M(f)$, the third sum can be evaluated as
\[
\sum_{z_0<p\leq z_1}\frac{M_p(f)}{p}=M(f)(\ln\ln z_1-\ln\ln z_0)+O(1).
\]
Plugging all of this into our estimate, we get
\[
S(m,\Omega)\ll m(\ln y)^{-\ell_f}\left(\frac{\ln z_1}{\ln y\ln z_0}\right)^{h_f}\left(\frac{\ln z_0}{\ln z_1}\right)^{M(f)}=
\]
\[
=m(\ln y)^{-\ell_f}\left(\frac{\ln\ln\ln y}{A^2(\ln\ln y)^2}\right)^{h_f}\left(\frac{A^2(\ln\ln y)^2}{\ln y\ln\ln\ln y}\right)^{M(f)}=\frac{A^{2M(f)-2h_f}y}{B\ln y}.
\]
Since the implied constant does not depend on $A$ and $B$, we can take $B$ large enough to obtain
\[
S(m,\Omega)\leq \frac{y}{4\ln y},
\]
so that
\[
R\leq S(m,\Omega)+o(y/\ln y)\leq \frac{y}{3\ln y}.
\]
Since for the third step we have $\pi(y)-\pi(y/2)=\frac{y}{(2+o(1))\ln y}$ primes $p\leq y$ left, one can sift one of remaining numbers at a time to ensure that every number $i\leq m$ is sifted out in these three steps. This concludes the proof.
\end{proof}

\section{Computation of $M(f)$}

In this section, we prove Theorem 2 on the value of $M(f)$. The quantity $M(f)$ is the logarithmic average of $M_p(f)$ over prime values of $p$. The number $M_p(f)$ is defined as the maximal number of solutions for $f(x)=y$ with $x\in \mathbb F_p$ for a fixed non-zero $y\in \mathbb F_p$. We will show that $M(f)$ always exists and is a rational number and we also reduce its computation to group actions.

In the introduction, we associated $f(x)$ with several Galois groups, which have a natural action on the set $X$ of all roots of the equation $f(x)-y$, where $y$ is a formal variable. Namely, $G_f$ is the Galois group of $f(x)-y$ over $\mathbb Q(y)$ and $G_f^+$ is the Galois group of the same polynomial over $K(y)$. It is easy to see that $G_f^+$ is a normal subgroup of $G_f$. Indeed, it is clear that $K$ is a finite normal extension of $\mathbb Q$. If $G_f^0$ is its Galois group, then there is a natural surjective homomorphism $\pi:G_f\to G_f^0$. More precisely, every automorphism of the splitting field of $f(x)-y$ over $\mathbb Q(y)$ sends constants to constants. Such an automorphism is an automorphism over $K(y)$ if and only if all the constant elements are fixed, hence $G_f^+=\ker \pi$ is a normal subgroup.

Birch and Swinnerton-Dyer \cite{BSD} were able to reduce the question on the number of solution of $f(x)=y$ to certain other Galois groups.

\begin{lemma}
Let $G_p^+$ be the Galois group of $f(x)-y$ over $\overline{\mathbb F}_p(y)$ and $G_p$ be the Galois group of the same polynomial over $\mathbb F_p(y)$. For $n\leq d=\deg f$ let $m_n$ be the number of $G_p$-invariant orbits of action of $G_p^+$ on ordered $n$-tuples of distinct roots of $f(x)=y$. Then for $m_n=0$ the system $f(x_1)=f(x_2)=\ldots=f(x_n)$ has no solutions in $\mathbb F_p$ with $x_i\neq x_j$ for $i\neq j$, and for $m_n>0$ the number of solutions is \[m_np+O(\sqrt p).
\]
\end{lemma}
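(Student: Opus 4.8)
The plan is to interpret the solution count geometrically and reduce it to the Weil/Lang--Weil bound for curves over finite fields, in the spirit of the Birch--Swinnerton-Dyer reduction. Fix $n$ and let $V_n\subset \mathbb{A}^{n+1}_{\mathbb{F}_p}$, with coordinates $(y,x_1,\dots,x_n)$, be cut out by $f(x_1)=\cdots=f(x_n)=y$, and let $V_n^\circ=V_n\setminus\Delta$ be the open locus where the $x_i$ are pairwise distinct; the quantity to estimate is $N_n=\#V_n^\circ(\mathbb{F}_p)$. The projection to $y$ exhibits the part $C_n$ of $V_n^\circ$ lying over $U$ as a finite étale cover $\pi\colon C_n\to U$, where $U\subset\mathbb{A}^1_y$ is obtained by deleting the finitely many critical values of $f$ (over which $f(x)-y$ has $d$ distinct roots); the geometric fiber of $\pi$ is exactly the set $\Sigma_n$ of ordered $n$-tuples of distinct roots of $f(x)-y$. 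Since $V_n^\circ\setminus C_n$ lies over the finite branch locus and hence contains $O(1)$ points, I would first record that $N_n=\#C_n(\mathbb{F}_p)+O(1)$.

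Next I would set up the dictionary between the geometry of $C_n$ and the group actions. Over $U$ the cover is governed by its monodromy: the geometric Galois group $G_p^+=\mathrm{Gal}(\,\cdot\,/\overline{\mathbb{F}}_p(y))$ acts on $\Sigma_n$, and its orbits are in bijection with the geometrically irreducible components of $C_n$. As $G_p^+$ is normal in $G_p$ with cyclic quotient $G_p/G_p^+\cong\mathrm{Gal}(\overline{\mathbb{F}}_p/\mathbb{F}_p)$ generated by Frobenius, Frobenius permutes these components, and a component is defined over $\mathbb{F}_p$ precisely when the corresponding $G_p^+$-orbit is stable under all of $G_p$, i.e.\ is one of the $m_n$ invariant orbits. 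Hence $m_n$ is exactly the number of (one-dimensional) geometrically irreducible components of $C_n$ defined over $\mathbb{F}_p$.

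To conclude I would apply the Lang--Weil estimate to $C_n$. Its degree is bounded purely in terms of $d$ and $n$, and for all but finitely many $p$ its geometric components are reductions of those of the fixed characteristic-zero cover, so their number and degrees are controlled uniformly in $p$. Lang--Weil then yields $\#C_n(\mathbb{F}_p)=m_n p+O(\sqrt p)$ with an implied constant depending only on $d$ and $n$, and combined with $N_n=\#C_n(\mathbb{F}_p)+O(1)$ this gives the stated asymptotic. For the case $m_n=0$ I would argue separately that there are in fact no solutions for all large $p$: any point of $C_n(\mathbb{F}_p)$ is fixed by Frobenius yet lies on a geometric component that Frobenius carries to a \emph{distinct} conjugate, so it lies in the intersection of two distinct components, a finite set; thus $\#C_n(\mathbb{F}_p)=O(1)$ and $N_n=0$ once $p$ is large enough.

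The main obstacle is the component--orbit dictionary together with its rationality refinement: making precise that geometric components of the étale cover correspond to $G_p^+$-orbits on $\Sigma_n$, and descend to $\mathbb{F}_p$ exactly for the $G_p$-stable orbits, requires the Galois/descent theory of covers and a clean identification of the arithmetic action with the Frobenius generator of $G_p/G_p^+$. A secondary technical point is uniformity of the error term in $p$: one must bound the degrees (and, if one instead argues curve by curve, the genera) of all components independently of $p$, which follows from comparison with the fixed characteristic-zero geometry away from finitely many primes and from the explicit, degree-dependent form of the Lang--Weil estimate.
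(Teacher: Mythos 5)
The paper itself gives no proof of this lemma: it is quoted from Birch and Swinnerton-Dyer \cite{BSD}, so there is no internal argument to compare with line by line. Your reduction of the main case to Lang--Weil via the dictionary between geometrically irreducible components of the cover that are defined over $\mathbb{F}_p$ and $G_p$-invariant $G_p^+$-orbits on the $n$-tuples is the standard route (and essentially the one in \cite{BSD}, who apply the Weil bound to the function fields of the components), and for $m_n>0$ it is sound; the uniformity issues you flag at the end are real but routine.

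The genuine gap is in the case $m_n=0$, where the lemma asserts there are \emph{no} solutions, not $O(1)$ of them. You discard the fibers over the branch locus at the outset (writing $N_n=\#C_n(\mathbb{F}_p)+O(1)$) and then show $\#C_n(\mathbb{F}_p)$ is small; in fact over the \'etale locus distinct geometric components are disjoint, so a Frobenius-fixed point forces its unique component to be Frobenius-stable and you get $\#C_n(\mathbb{F}_p)=0$ exactly, which is cleaner than the intersection argument you sketch. But this only yields $N_n=O(1)$, and ``$O(1)$ for large $p$'' does not become ``$0$'': the surviving candidates are precisely tuples of distinct $\mathbb{F}_p$-roots of $f(x)-y_0$ for a \emph{critical} value $y_0\in\mathbb{F}_p$, where several Frobenius-conjugate geometric components of the closure can meet in a rational point, and your proposal never addresses them. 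Ruling them out needs a separate local argument comparing the decomposition and inertia groups at the critical places with the orbit structure on the generic fibre, and the point is not cosmetic for this paper: the proof of Theorem 2 uses the exact vanishing to get $M_p(f)=\max_{h\in G_f^+}|X^{hg}|$ for all sufficiently large $p$, and a critical-fibre solution, when it exists for one large prime, exists for a positive density of primes by Chebotarev, so even a bounded number of stray solutions per prime would distort $\sum_{p\leq x}M_p(f)/p$ by an unbounded amount and invalidate the computation of $M(f)$.
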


For convenience, let us formulate Chebotarev's density theorem. First, define the Frobenius element.
\begin{definition}
Let $K\slash \mathbb Q$ be a finite Galois extenstion, $\mathcal O_K$ its ring of integers and $G_K$ --- its Galois group. Suppose that $p$ is an unramified rational prime number and $\mathfrak p\subset \mathcal O_K$ is a prime ideal lying over $p$. The Frobenius element of $\mathfrak p$ is an element $\sigma_{\mathfrak p}\in G_K$ with
\[
\sigma_{\mathfrak p}x\equiv x^p \pmod {\mathfrak p}
\]
for all $x\in \mathcal O_K$.

For different prime ideals $\mathfrak p$ lying over fixed prime number $p$ Frobenius elements are conjugated, therefore up to conjugation we can define the Frobienus element $\sigma_p$.
\end{definition}
Chebotarev's density theorem states the following
\begin{lemma}
Let $K\slash \mathbb Q$ be a finite Galois extension with Galois group $G_K$. Suppose that $C\subset G_K$ is a conjugacy class and $\pi(x;K,C)$ is the number of primes $p\leq x$ such that the Frobenius element $\sigma_p$ lies in $C$. Then for some $c_K>0$ we have
\[
\pi(x;G_K,C)=\frac{|C|}{|G_K|}li(x)+O(xe^{-c_K\sqrt{\ln x}}).
\]
\end{lemma}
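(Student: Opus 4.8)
The plan is to deduce this effective form of Chebotarev's theorem from the analytic theory of Hecke $L$-functions, via the Deuring reduction combined with the classical zero-free region, which is the route taken in \cite{LaOd}. Throughout write $G=G_K$, fix a representative $g\in C$, let $m$ be the order of $g$, let $H=\langle g\rangle\leq G$ be the cyclic group it generates, and let $E=K^H$ be its fixed field. Then $K/E$ is an \emph{abelian} (in fact cyclic) extension of degree $m$, and the first step is to express $\pi(x;K,C)$ in terms of prime ideals of $E$. The standard Deuring argument shows that, up to an error of size $O(\sqrt{x})$ coming from ramified primes and from primes of residue degree $\ge 2$, counting rational primes $p\le x$ with $\sigma_p=C$ is, after multiplying by an explicit positive rational constant depending only on $|C|$, $|H|$ and $|G|$, the same as counting degree-one prime ideals $\mathfrak p$ of $E$ whose Artin symbol $\left(\frac{K/E}{\mathfrak p}\right)$ equals $g$. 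The point of this reduction is that it replaces the nonabelian class $C\subset G$ by a single element $g$ in an abelian Galois group, where class field theory applies.

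Next I would detect the condition on the Artin symbol by characters. Since $\operatorname{Gal}(K/E)\cong H$ is cyclic of order $m$, its characters $\chi$ are exactly the Hecke characters of $E$ cut out by $K$, and orthogonality gives the indicator
\[
\mathbf 1\!\left[\left(\tfrac{K/E}{\mathfrak p}\right)=g\right]=\frac1m\sum_{\chi}\overline{\chi(g)}\,\chi(\mathfrak p).
\]
Summing over degree-one primes $\mathfrak p$ with $N\mathfrak p\le x$ therefore reduces the whole count to the weighted prime sums
\[
\psi(x,\chi)=\sum_{N\mathfrak p\le x}\chi(\mathfrak p)\ln N\mathfrak p
\]
(with the degree-$\ge 2$ contribution again absorbed into $O(\sqrt x)$). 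Each $L(s,\chi)$ is an entire Hecke $L$-function for $\chi\neq\chi_0$, while $L(s,\chi_0)=\zeta_E(s)$ has a simple pole at $s=1$; the pole produces the main term for the trivial character, which upon unwinding the Deuring weights becomes $\frac{|C|}{|G|}\mathrm{li}(x)$.

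The analytic heart is then standard: a contour shift of $-\frac1{2\pi i}\int \frac{L'}{L}(s,\chi)\frac{x^s}{s}\,ds$ (or the truncated Perron / explicit-formula version) past the line $\Re s=1$. Using the de la Vallée Poussin zero-free region $\Re s>1-c/\ln\big(\mathfrak q_\chi(|t|+2)\big)$ for Hecke $L$-functions, where $\mathfrak q_\chi$ is the conductor of $\chi$, together with the standard bound on the number of zeros in a unit box, one obtains $\psi(x,\chi)=O\!\big(x\,e^{-c_K\sqrt{\ln x}}\big)$ for $\chi\neq\chi_0$ and $\psi(x,\chi_0)=x+O\!\big(x\,e^{-c_K\sqrt{\ln x}}\big)$; summing the finitely many characters and passing from $\psi$ to $\pi$ by partial summation yields the stated formula with $\mathrm{li}(x)$ as main term and error $O(x e^{-c_K\sqrt{\ln x}})$.

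I expect the main obstacle to be a possible Siegel (exceptional) real zero of $L(s,\chi)$ for a real character $\chi$: such a zero, if present, lies too close to $s=1$ to be excluded by the de la Vallée Poussin region alone and would spoil the error term. The usual resolution is either to invoke Siegel's theorem, at the cost of rendering $c_K$ ineffective, or to keep the exceptional term explicit and control it as in \cite{LaOd}; this is precisely why the quoted form of the theorem carries a constant $c_K>0$ allowed to depend on $K$. The remaining technical points---the analytic continuation and functional equation of $L(s,\chi)$ (here automatic, since these are abelian Hecke $L$-functions, so Artin's conjecture is not needed), the uniformity in the finitely many characters, and the bounds on ramified and higher-degree primes---are routine.
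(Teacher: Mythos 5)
Your sketch is correct and follows exactly the route of the reference the paper itself cites: the paper's ``proof'' of this lemma is simply a pointer to \cite{LaOd}, and your Deuring reduction to the cyclic extension $K/K^{\langle g\rangle}$, character orthogonality, and contour shift with the de la Vall\'ee Poussin zero-free region is precisely the Lagarias--Odlyzko argument. Your caveat about an exceptional real zero is handled even more simply here, since $K$ is fixed: any real zero $\beta<1$ of the finitely many Hecke $L$-functions involved is a fixed distance from $1$, so its contribution $x^{\beta}$ is absorbed into $O(xe^{-c_K\sqrt{\ln x}})$ with $c_K$ depending on $K$, as the statement allows.
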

\begin{proof}
See \cite{LaOd}.
\end{proof}

Let us now prove Theorem 2.
\begin{proof}[Proof of Theorem 2]
In \cite{BSD} it is proved that for large enough $p$ the group $G_p^+$ can be identified with the Galois group of $f(x)-y$ over $\mathbb C(y)$. It is easy to see that this group coincides with $G_f^+$. Next, large primes $p$ are unramified in $K$, so we fix some prime ideal $\mathfrak p$ lying over $p$ in $\mathcal O_K$ and consider the corresponding Frobenius element $\sigma_{\mathfrak p}$. The largest constant subfield of $(\mathbb F_p)_f$ can be identified with $\mathcal O_K\slash \mathfrak p$. Moreover, $G_p$ acts on constants via Frobenius automorphism and its powers. For large $p$, the group $G_p$ is generated by $G_f^+$ and the preimage of $\sigma_{\mathfrak p}$, i.e.
\[
G_p=\langle \pi_p^{-1}(\sigma_{\mathfrak p}),G_f^+ \rangle,
\]
where $\pi_p$ is the homomorphism of restriction of $G_p-$action on constants (its image is Galois group of $\mathcal O_K\slash \mathfrak p$ over $\mathbb F_p$), which allows us to consider $G_p$ as a subgroup of $G_f$. Let $g$ be any element of $\pi_p^{-1}(\sigma_{\mathfrak p})$. Let us show that in this case the maximal $n$ with $m_n>0$ is equal to $\max_{h\in G_f^+}|X^{hg}|$. Indeed, $G_f^+$ is normal in $G_p$, hence the latter group is generated by $g$ and $G_f^+$, so a $G_f^+-$orbit is $G_p-$invariant if and only if it is $g$-invariant. This means that if our orbit takes form $G_f^+(x_1,\ldots, x_n)$, then for any $h_1\in G_f^+$ there must exist $h_2\in G_f^+$ such that
\[
gh_2(x_1,\ldots,x_n)=h_1(x_1,\ldots,x_n).
\]
The subgroup $G_f^+$ is normal, so there exists  $h_3$ with $gh_2=h_3g$. Therefore, our condition is equivalent to the following: for any $h_3$ one can find $h_1$ such that $x_1,\ldots,x_n$ are fixed by $hg$, where $h=h_1^{-1}h_3$. This proves the desired identity. Thus, for large $p$ the number $M_p(f)$ is equal to $\max_{h\in G_f^+}|X^{hg}|$, where $g\in \pi_p^{-1}(\sigma_{\mathfrak p})\subset \pi_p^{-1}(\sigma_p)=:C$ and $\sigma_p$ is the conjugacy class of $\sigma_{\mathfrak p}$ in $G_f^0$. Since $G_f^+$ is normal, this number does not depend on the choice of $g$ and $\mathfrak p$. Chebotarev's density theorem shows that
\[
\sum_{p\leq x}M_p(f)=\sum_{C, g_0\in C}\frac{|C|}{|G_f^0|}\max_{h\in G_f^+}|X^{h\pi_p^{-1}(g_0)}|li(x)+O(xe^{-c_K\sqrt{\ln x}})=
\]
\[
li(x)\frac{1}{|G_f^0|}\sum_{g_0\in G_f^0}\max_{h\in G_f^+}|X^{h\pi_p^{-1}(g_0)}|+O(xe^{-c_K\sqrt{\ln x}}),
\]
where $C$ in the first sum runs over all conjugacy classes of $G_f^0$.
Next, $G_f^0=G_f/G_f^+$ and the value $\max_h |X^{hg}|$ depends only on the class of $g\in G_f$ in the quotient $G_f/G_f^+$. This means that the last sum is equal to $m(G_f,G_f^+;X)$. Therefore,
\[
\sum_{p\leq x}M_p(f)=m(G_f,G_f^+;X)li(x)+O(xe^{-c_K\sqrt{\ln x}})
\]
and by partial summation we obtain
\[
\sum_{p\leq x}\frac{M_p(f)}{p}=m(G_f,G_f^+;X)\ln\ln x+O(1),
\]
which concludes the proof.
\end{proof}

\begin{remark}
For a few natural examples, the number $m(G,H;X)$ turns out to be integer, at least when $H$ is normal in $G$ and actions of both $G$ and $H$ are faithful and transitive. However, there are examples of non-integrality: the group $G=S_4\times \mathbb Z/2\mathbb Z$ acts transitively on a set $X$ with $6$ elements. Taking $H=A_4$, we obtain $m(G,H;X)=\frac{7}{2}$. This example was found by Peter Kosenko, who also managed to find some examples of non-integrality for $|X|=8,9$ and $10$.
Incidentally, $S_4\times \mathbb Z/2\mathbb Z$ is the Galois group of generic even sextic polynomial. Unfortunately, we were not able to produce examples of polynomials $f(x)$ of degree $6$ with $G_f=S_4\times \mathbb Z/2\mathbb Z$ and $G_f^+=A_4$.
\end{remark}

Since now we have a tool for computation of $M(f)$, let us find $M(f)$ for $f_d$ and for a typical polynomial of degree $d$. We call a polynomial of degree $d$ \emph{generic} if $G_f^+\cong S_d$.
\begin{theorem}
Let $d$ be an integer. 
\begin{itemize}
    \item[i)] We have $M(x^d)=\tau(d)$, where $\tau(d)$ is the divisor function.
    \item[ii)] For any generic polynomial $f(x)$ of degree $d$ we have $M(f)=d$.
\end{itemize}
\end{theorem}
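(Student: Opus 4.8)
The plan is to feed both parts through Theorem 2, which identifies $M(f)$ with the group-theoretic average $m(G_f,G_f^+;X)$. The first thing I would record is a simplification of $m(G,H;X)$ valid when $H$ is normal in $G$ (which holds here, since $G_f^+\trianglelefteq G_f$): if $c=Hg$ is the coset containing $g$, then $\{hg:h\in H\}=Hg=c$, so $\max_{h\in H}|X^{hg}|=\max_{g'\in c}|X^{g'}|$ is constant on $c$. Summing over $g$ and dividing gives
\[
m(G,H;X)=\frac{1}{[G:H]}\sum_{c\in G/H}\ \max_{g'\in c}|X^{g'}|,
\]
the average over cosets of the largest number of fixed points occurring in a coset, with $[G_f:G_f^+]=|G_f^0|$.

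For part (ii) I would first pin down $G_f$ itself. The splitting field $\mathbb Q_f$ is generated over $\mathbb Q(y)$ by the roots $X$ of $f(x)-y$, so any automorphism fixing $X$ pointwise fixes all of $\mathbb Q_f$ and is the identity; hence $G_f$ acts faithfully on $X$ and embeds into $\mathrm{Sym}(X)\cong S_d$. Since $G_f^+\cong S_d$ already sits inside $G_f$, this forces $G_f=G_f^+=S_d$ and $G_f^0=1$. There is then a single coset, and $m(S_d,S_d;X)=\max_{g\in S_d}|X^g|=d$, attained by the identity, so $M(f)=d$.

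For part (i), with $f=x^d$, I would identify the groups explicitly. Writing $t=y^{1/d}$, the roots are $\alpha_k=\zeta_d^k t$ for $k\in\mathbb Z/d\mathbb Z$, and $\mathbb Q_f=\mathbb Q(\zeta_d)(t)$ with $t$ transcendental; hence the constant field is $K=\mathbb Q(\zeta_d)$ and $G_f^0=(\mathbb Z/d\mathbb Z)^{\times}$. Since $x^d-y$ is Eisenstein at $y$ over $\mathbb Q(\zeta_d)[y]$, the Kummer extension gives $G_f^+\cong\mathbb Z/d\mathbb Z$, acting on $X\cong\mathbb Z/d\mathbb Z$ by the translations $k\mapsto k+b$; adjoining the automorphisms $\zeta_d\mapsto\zeta_d^a,\ t\mapsto t$, which act as $k\mapsto ak$, realizes $G_f$ as the affine group $\{k\mapsto ak+b:\ a\in(\mathbb Z/d\mathbb Z)^{\times},\ b\in\mathbb Z/d\mathbb Z\}$ of order $d\,\varphi(d)$. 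The cosets of $G_f^+$ are indexed by the multiplier $a$, and the fixed points of $k\mapsto ak+b$ are the solutions of $(a-1)k\equiv -b\pmod d$, whose number is maximized over $b$ at $\gcd(a-1,d)$. The coset formula then yields
\[
M(x^d)=\frac{1}{\varphi(d)}\sum_{a\in(\mathbb Z/d\mathbb Z)^{\times}}\gcd(a-1,d).
\]

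The remaining task is the elementary identity $\sum_{a\in(\mathbb Z/d\mathbb Z)^{\times}}\gcd(a-1,d)=\tau(d)\varphi(d)$, which I would prove by multiplicativity: by the CRT the sum factors over prime powers, and a direct count of the units $a$ with $v_p(a-1)=j$ gives $\sum_{a}\gcd(a-1,p^e)=(e+1)\varphi(p^e)=\tau(p^e)\varphi(p^e)$, which multiplies up to the claim, so $M(x^d)=\tau(d)$. I expect the main difficulty to be bookkeeping rather than conceptual: correctly determining $K=\mathbb Q(\zeta_d)$ and the affine structure of $G_f$ (and the faithfulness that collapses part (ii)), plus the clean prime-power count. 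As a cross-check, and an independent route to part (i), I would note that $M_p(x^d)=\gcd(d,p-1)$ for $p\nmid d$; expanding $\gcd(d,p-1)=\sum_{e\mid d}\varphi(e)\,[\,e\mid p-1\,]$ and applying Mertens' theorem in arithmetic progressions gives $\sum_{p\le X}M_p(x^d)/p=\tau(d)\ln\ln X+O(1)$ directly from the definition of $M(f)$.
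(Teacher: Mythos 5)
Your proposal is correct, and for part (ii) it coincides with the paper's argument: both use that $G_f$ acts faithfully on the $d$ roots, so $G_f^+\cong S_d$ forces $G_f=G_f^+$, and the maximum over each coset is attained at the identity, giving $d$. For part (i) you take the opposite route from the paper. The paper's primary proof is the elementary one you relegate to a cross-check: it computes $M_p(x^d)=(d,p-1)$ directly, expands $(d,p-1)=\sum_{d_1\mid(d,p-1)}\varphi(d_1)$, and applies the prime number theorem in arithmetic progressions so that the $\varphi(d_1)$ factors cancel, leaving $\tau(d)$; it only \emph{sketches} the group-theoretic reformulation (identifying $K=\mathbb Q(\zeta_d)$, $G_f\cong\mathrm{Aff}(\mathbb Z/d\mathbb Z)$, $G_f^+\cong\mathbb Z/d\mathbb Z$) without carrying out the computation of $m(G_f,G_f^+;X)$. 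You carry that computation out in full: the coset-averaging identity $m(G,H;X)=\frac{1}{[G:H]}\sum_{c}\max_{g'\in c}|X^{g'}|$, the count $\max_b|\{k:(a-1)k\equiv -b\}|=\gcd(a-1,d)$, and the identity $\sum_{a\in(\mathbb Z/d\mathbb Z)^\times}\gcd(a-1,d)=\tau(d)\varphi(d)$ (your prime-power count checks out; one can also see it by writing $\gcd(a-1,d)=\sum_{e\mid d}\varphi(e)[a\equiv 1\bmod e]$ and counting $\varphi(d)/\varphi(e)$ units in each congruence class, which is precisely the arithmetic dual of the paper's computation over primes). What your route buys is a self-contained illustration of Theorem 2 in action and a verification that the group-theoretic formula really does reproduce $\tau(d)$; what the paper's route buys is independence from Theorem 2 (and hence from Chebotarev), needing only Mertens-type estimates in arithmetic progressions. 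Both are complete and correct.
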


\begin{proof}
To prove the first part, note that for $f(x)=x^d$ we have 
\[M_p(f)=(p-1,d).
\]Indeed, in this case it is easy to see that if $y\neq 0$ and $x^d=y$ has at least one solution, then the number of solutions is equal to the number or $d-$th roots of unity in $\mathbb F_p$. Next, using the identity
\[
 (d,p-1)=\sum_{d_1\mid (d,p-1)}\varphi(d_1),
 \]
 we get for any $x\geq 2$
\[
\sum_{p\leq x}\frac{M_p(f_d)}{p}=\sum_{p\leq x}\frac{(d,p-1)}{p}=
\]
\[
=\sum_{p\leq x}\frac{\sum_{d_1\mid (d,p-1)}\varphi(d_1)}{p}=\sum_{d_1\mid d}\varphi(d_1)\sum_{\substack{p\leq x\\ p\equiv 1\pmod {d_1}}}\frac{1}{p}.
\]
Next, from Prime Number Theorem for primes in arithmetic progressions we find
\[
\sum_{\substack{p\leq x\\ p\equiv 1\pmod {d_1}}}\frac{1}{p}=\frac{\ln\ln x}{\varphi(d_1)}+O(1).
\]
Substituting this identity into the formula above, we get the desired identity: the factors $\varphi(d_1)$ and $\frac{1}{\varphi(d_1)}$ cancel out and we are left with the sum $\sum_{d_1\mid d}1=\tau(d)$. This proof does not rely on Theorem 2, but can be easily reformulated in terms of the natural action of affine group $\mathrm{Aff}(\mathbb Z\slash d\mathbb Z)$ on $\mathbb Z\slash d\mathbb Z$. Indeed, in our case $K=\mathbb Q(\zeta_d)$ is the cyclotomic field. Every automorphism of $\mathbb Q_f$ sends $\zeta_d$ to $\zeta_d^a$ for some $a\in \left(\mathbb Z\slash d\mathbb Z\right)^*$ and $y$ to $\zeta_d^b y$ for some $b\in \mathbb Z\slash d\mathbb Z$. The map $(a,b)\mapsto (x\mapsto ax+b)$ is an isomorphism $G_f\cong \mathrm{Aff}(\mathbb Z\slash d\mathbb Z)$. Since $G_f^+$ fixes $\zeta_d$, we also see that $G_f^+$ corresponds to the shift subgroup $y\mapsto y+b$ and is isomorphic to $\mathbb Z\slash d\mathbb Z$.

As for generic polynomials, we have $G_f^+\cong S_d$, hence $G_f=G_f^+$ and for all $g\in G_f$
\[
\max_{h\in G_f^+}|X^{gh}|=|X^e|=d,
\]
as needed.
\end{proof}

The paper \cite{BSD} contains, among other things, some criteria for $f(x)$ to be generic. In particular, authors prove that if $f'(x)$ has simple roots and $f(\alpha)\neq f(\beta)$ for any two distinct roots, then $f(x)$ is generic. Of course, this means that coefficients of any non-generic polynomial must satisfy certain polynomial relation. This, in turn, implies that most polynomials of degree $d$ are generic.

It is also clear that our proof of $M(f)=d$ for generic polynomials uses only $G_f=G_f^+$. Obviously, the latter condition is actually equivalent to $M(f)=d$. Since $G_f^0=G_f\slash G_f^+$, it is also equivalent to $K=\mathbb Q$, i.e. all the constants in $\mathbb Q_f$ must be rational. Let us call such polynomials \emph{ordinary}.

Currently, we are not aware of any simple way to test whether a given polynomial is ordinary, besides more narrow criteria of Birch and Swinnerton-Dyer.

The polynomials that we call generic are called Morse functions by J.-P. Serre in his book on Galois theory \cite[Section 4.4]{Ser}. It is also mentioned that Hilbert established an isomorphism $G_f\cong S_n$ for all such polynomials, see also \cite{Hilb}

Similarly to the results on gaps between primes, our results imply the following
\begin{corollary}
For any $f(x)\in \mathbb Z[x]$ there exists a constant $c_f>0$ such that for all $X\geq e^{16}$ there is a positive integer $n\leq X$ with $\Omega(n+f(m))>A(n)$ for all $1\leq m\leq Y$, where
\[
Y=c_f\ln X(\ln\ln X)^{\ell_f-1}\left(\frac{(\ln\ln\ln X)^2}{\ln\ln\ln\ln X}\right)^{h_f}\left(\frac{\ln\ln X\ln\ln\ln\ln X}{(\ln\ln\ln X)^2}\right)^{M(f)},
\]
$\Omega$ is the number of prime factors counted with multiplicity and $A(n)$ is the number of irreducible factors of $f(x)+n$.
\end{corollary}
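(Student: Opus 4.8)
The plan is to read the corollary off from Theorem 1 by converting the sieve construction behind $j_f$ into a lower bound for $\Omega(n+f(m))$, exploiting the fact that the irreducible factors of $f(x)+n$ are forced to take large values at the arguments $m\le Y$.

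First I would choose $y$ maximal with $P(y)\le X$. By the prime number theorem $\ln P(y)=\vartheta(y)\sim y$, and since multiplying $P(y)$ by the next prime already exceeds $X$ one gets $y\sim\ln X$ and $P(y)\asymp X/\ln X=o(X)$; consequently $\ln y\sim\ln\ln X$, $\ln\ln y\sim\ln\ln\ln X$, and $\ln\ln\ln y\sim\ln\ln\ln\ln X$. Substituting these equivalences into the lower bound of Theorem 1 shows that $j_f(P(y))$ exceeds a constant multiple of the displayed $Y$, so for a suitably small $c_f>0$ we may assume $Y\le j_f(P(y))$. By the definition of $j_f$ there is an integer $x$ with $(x+f(i),P(y))>1$ for all $i\le j_f(P(y))$; the congruence conditions determine $x$ only modulo $P(y)$, so we may take $n$ to be the largest solution with $n\le X$, and then $n>X-P(y)\sim X$, i.e. $n\asymp X$. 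For this $n$, each $n+f(m)$ with $1\le m\le Y$ has a prime factor $p_m\le y$.

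The heart of the matter is a size estimate. Write $f(x)+n=c\prod_{j=1}^{A(n)}P_j(x)$, where $c$ is the leading coefficient and the $P_j$ are the irreducible factors over $\mathbb Z$. Since $n\asymp X$ is large, every root $\alpha$ of $f(x)+n$ satisfies $|f(\alpha)|=n$ and hence $|\alpha|\asymp X^{1/d}$ with $d=\deg f$. Because $Y$ is only polylogarithmic in $X$, for every $m\le Y$ each argument lies far from all these roots, so $|m-\alpha|\gg X^{1/d}$ and therefore $|P_j(m)|\gg X^{1/d}$ for every factor $P_j$. For $X$ large this is at least $2$, so each factor contributes $\Omega(P_j(m))\ge 1$ and already $\Omega(n+f(m))\ge A(n)$. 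Now the prime $p_m\le y$ divides the product $\prod_j P_j(m)$, hence it divides some factor $P_{j_0}(m)$; since $|P_{j_0}(m)|\gg X^{1/d}>y\ge p_m$, that factor is composite and $\Omega(P_{j_0}(m))\ge 2$. Summing gives $\Omega(n+f(m))\ge A(n)+1>A(n)$ for every $1\le m\le Y$, which is the assertion.

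The crux, and the reason the inequality holds for \emph{every} $m\le Y$ rather than merely for most, is the uniform estimate $|P_j(m)|\gg X^{1/d}$: because $n$ has order $X$, all roots of $f(x)+n$ are pushed out to order $X^{1/d}$, so no irreducible factor can be as small as the sieving prime $p_m\le y$, and there are no exceptional small values of $m$ to treat separately. What then remains is bookkeeping, namely absorbing the $(1+o(1))$ losses from $y\sim\ln X$, the harmless extra term $\Omega(c)\ge 0$, and the finitely many $X$ near $e^{16}$ into the constant $c_f$. I expect the only mild subtlety to be verifying $n\asymp X$ so that the root bound applies, which is exactly why one selects the largest admissible $n\le X$ and uses $P(y)=o(X)$.
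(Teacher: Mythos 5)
The paper states this corollary without proof, and your overall strategy --- convert Theorem 1 into a residue class $n\bmod P(y)$ with $(n+f(m),P(y))>1$ for all $m\le Y$, then show that every irreducible factor $P_j$ of $f(x)+n$ takes values of size $\gg X^{1/d}$ at the points $m\le Y$, so that the forced prime $p_m\le y$ yields one prime factor beyond the generic count $A(n)$ --- is exactly the intended deduction, and the size estimate via the roots of $f(x)+n$ is the right key point. However, there is one genuine error in your setup. From $P(y)\le X<P(y)\cdot y'$ (with $y'$ the next prime) you may conclude $P(y)>X/y'\gg X/\ln X$, but that is a \emph{lower} bound, not an upper bound: $P(y)$ can be as large as $X$ itself (take $X=P(y)$), so the assertions ``$P(y)\asymp X/\ln X=o(X)$'' and ``$n>X-P(y)\sim X$'' are false in general. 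In the worst case the residue class $x\bmod P(y)$ has a unique representative $n$ in $[1,X]$, and that representative may be tiny; then the roots of $f(x)+n$ are not pushed out to modulus $\asymp X^{1/d}$, and the size argument collapses (an irreducible factor could take the value $\pm 1$ or $\pm p_m$ at some $m\le Y$).

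The repair is cheap but should be made explicit: choose $y$ maximal with $P(y)\le X/2$ (or $\le\sqrt X$). One still has $y=(1+o(1))\ln X$, so Theorem 1 still dominates $Y$ for a suitable $c_f$, and now the interval $(X/2,X]$ has length at least $P(y)$ and therefore contains a representative $n$ of the class, giving $n\ge X/2$. (In fact you only need $n$ to exceed a fixed power of $\ln X$ so that every root of $f(x)+n$ has modulus at least $2Y$, but $n\ge X/2$ is the simplest way to secure this.) Two smaller points deserve a sentence each: the prime $p_m$ dividing $n+f(m)$ might divide the content $c$ of $f(x)+n$ rather than some $P_j(m)$, but then $\Omega(c)\ge 1$ already supplies the extra factor; and the degenerate case $n+f(m)=0$, where $(n+f(m),P(y))>1$ holds trivially but $\Omega$ is undefined, is excluded by the same root-modulus bound. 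With these adjustments your proof is complete and matches the argument the paper leaves to the reader.
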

One can also note that our proofs can be extended to integer-valued polynomials $f(x)$ which are not necessarily in $\mathbb Z[x]$.

\section{Acknowledgements}

We would like to thank Peter Kosenko for providing an abstract example of non-integrality of $m(G,H;X)$ (see Remark 2) and Mathoverflow user Joachim K\"{o}nig for providing the main idea for the proof of Theorem 2: \href{https://mathoverflow.net/a/430001/101078}{https://mathoverflow.net/a/430001/101078}. We also thank anonymous referees for correcting several inaccuracies, especially in Section 3, and for pointing out the references to Serre and Hilbert.\\This work was performed at the Steklov International Mathematical Center and supported by the Ministry of Science and Higher Education of the Russian Federation (agreement no. 075-15-2022-265). The first author was supported by the Basic Research Program of the National Research University Higher School of Economics.

\newpage
 \bibliographystyle{amsplain}

\end{document}